\documentclass[11pt]{amsart}
\usepackage[margin=30mm]{geometry}
\usepackage{amsmath,amssymb}
\usepackage{amsthm}
\usepackage{mathrsfs}

\newtheorem{thm}{Theorem}[section]

\newtheorem{lem}{Lemma}[section]

\newtheorem{defi}{Definition}[section]
\newtheorem{ex}{Example}[section]
\newtheorem{rem}{Remark}[section]

\newtheorem*{theorem}{\it{Theorem}}

\usepackage{enumitem}

\begin{document}

\title{On ball expanding maps}
\author{Noriaki Kawaguchi}
\subjclass[2020]{37B05, 37B65}
\keywords{ball expanding, shadowing, chain recurrent set, entropy, locally eventually onto}
\address{Department of Mathematical and Computing Science, School of Computing, Institute of Science Tokyo, 2-12-1 Ookayama, Meguro-ku, Tokyo 152-8552, Japan}
\email{gknoriaki@gmail.com}

\begin{abstract}
We study the dynamical properties of ball expanding maps, a class of continuous self-maps defined on compact metric spaces. For a ball expanding map, we show that: (1) the set of periodic points is dense in the chain recurrent set; (2) if the topological entropy of the map is zero, then the chain recurrent set is finite; (3) the map has only finitely many chain components; (4) if the space is perfect, then the topological entropy of the map is positive; and (5) if the space is connected, then the map is locally eventually onto and hence mixing. Several examples are also provided.
\end{abstract}

\maketitle

\markboth{NORIAKI KAWAGUCHI}{On ball expanding maps}

\section{Main results and their proofs}

{\em Expanding maps} have played a significant role in the study of dynamical systems due to their rich structure and chaotic properties (see, e.g., \cite{KH}). {\em Ball expanding maps} are a class of continuous self-maps defined on compact metric spaces, characterized by their local expansive behavior \cite{BGO}. In this paper, we investigate the dynamics of ball expanding maps, focusing on periodic points, chain recurrence, entropy, and mixing properties.  Our results reveal how local ball expansion imposes global dynamical constraints. Specifically, for a ball expanding map, we show that
\begin{itemize}
\item[--] the set of periodic points is dense in the chain recurrent set (Theorem 1.1),
\item[--] if the topological entropy of the map is zero, then the chain recurrent set is finite (1.2),
\item[--] the map has only finitely many chain components (1.3),
\item[--] if the space is perfect, then the topological entropy of the map is positive (1.4),
\item[--] if the space is connected, then the map is locally eventually onto and hence mixing (1.5).
\end{itemize}

We begin with the definition of ball expanding maps. Throughout, $X$ denotes a compact metric space endowed with a metric $d$. For $x\in X$ and $r>0$, we denote by $B_r(x)$ the closed $r$-ball centered at $x$: $B_r(x)=\{y\in X\colon d(x,y)\le r\}$.

\begin{defi}
\normalfont
A continuous map $f\colon X\to X$ is said to be {\em ball expanding} if there are $0<L<1$ and $\delta_0>0$ such that
\[
B_\delta(f(x))\subset f(B_{L\delta}(x))
\]
for all $x\in X$ and $0<\delta\le\delta_0$ \cite{BGO}.
\end{defi}

\begin{rem}
\normalfont
\begin{itemize}
\item By Theorem 4.3 of \cite{BGO}, we know that if a continuous map $f\colon X\to X$ is ball expanding, then $f$ has the {\em h-shadowing property}. A map $f\colon X\to X$ is called an {\em open map} if $f(U)$ is open for all open subset $U$ of $X$. We know that a continuous map $f\colon X\to X$ with the h-shadowing property is an open map.
\item In \cite{BGO}, a continuous map $f\colon X\to X$ is said to be {\em expanding} if there are $\delta_0>0$ and $0<L<1$ such that for any $x,y\in X$, $d(x,y)\le\delta_0$ implies
\[
d(f(x),f(y))\ge L^{-1}d(x,y).
\]
By Theorem 2.5 of \cite{BGO}, we know that for a continuous map $f\colon X\to X$, the following conditions are equivalent
\begin{itemize}
\item $f$ is expanding and open,
\item $f$ is ball expanding and locally one-to-one.
\end{itemize}
\end{itemize}

\end{rem}

In \cite{K2}, the ball expanding condition has been related to the contractive shadowing property, i.e., the Lipschitz shadowing property such that the Lipschitz constant is less than $1$. We recall the definition of shadowing.

\begin{defi}
\normalfont
Let $f\colon X\to X$ be a continuous map and let $\xi=(x_i)_{i\ge0}$ be a sequence of points in $X$. For $\delta>0$, $\xi$ is called a {\em $\delta$-pseudo orbit} of $f$ if
\[
\sup_{i\ge0}d(f(x_i),x_{i+1})\le\delta.
\]
For $\epsilon>0$, $\xi$ is said to be {\em $\epsilon$-shadowed} by $x\in X$ if
\[
\sup_{i\ge0}d(f^i(x),x_i)\le\epsilon.
\]
The map $f$ is said to have the {\em shadowing property} if for any $\epsilon>0$, there is $\delta>0$ such that every $\delta$-pseudo orbit of $f$ is $\epsilon$-shadowed by some point of $X$. For $L>0$, we say that $f$ has the {\em $L$-Lipschitz shadowing property} if there is $\delta_0>0$ such that for any $0<\delta\le\delta_0$, every $\delta$-pseudo orbit of $f$ is $L\delta$-shadowed by some point of $X$. We say that $f$ has the {\em contractive shadowing property} if $f$ has the $L$-Lipschitz shadowing property for some $0<L<1$ \cite{K2}.
\end{defi}

\begin{rem}
\normalfont
For a continuous map $f\colon X\to X$, let $0<L<1$ and $\delta_0>0$ be as in the definition of ball expanding. By Remark 1.3 (2) of \cite{K2}, we know that $f$ has the $M$-Lipschitz shadowing property, where $M=L/(1-L)$. Note that for any $i\ge0$, we have 
\[
B_\delta(f^i(x))\subset f^i(B_{L^i\delta}(x))
\]
for all $x\in X$ and $0<\delta\le\delta_0$. As a consequence, for every $i\ge1$, $f^i$ has the $M_i$-Lipschitz shadowing property, where $M_i=L^i/(1-L^i)$.
\end{rem}

{\em Chain components}, which appear in the (so-called) fundamental theorem of dynamical systems by Conley, are essential objects for a global understanding of dynamical systems \cite{C}. Our main results and their proofs are based on the notions of chain recurrence and chain components. Their definitions are as follows.

\begin{defi}
\normalfont
Let $f\colon X\to X$ be a continuous map. For $\delta>0$, a finite sequence $(x_i)_{i=0}^k$ of points in $X$, where $k\ge1$, is called a {\em $\delta$-chain} of $f$ if
\[
\sup_{0\le i\le k-1}d(f(x_i),x_{i+1})\le\delta.
\]
For any $x,y\in X$, the notation $x\rightarrow y$ means that for every $\delta>0$,  there is a $\delta$-chain $(x_i)_{i=0}^k$ of $f$ with $x_0=x$ and $x_k=y$. The {\em chain recurrent set} $CR(f)$ for $f$ is defined by
\[
CR(f)=\{x\in X\colon x\rightarrow x\}.
\]
We define a relation $\leftrightarrow$ in
\[
CR(f)^2=CR(f)\times CR(f)
\]
by: for any $x,y\in CR(f)$, $x\leftrightarrow y$ if and only if $x\rightarrow y$ and $y\rightarrow x$. Note that $\leftrightarrow$ is a closed equivalence relation in $CR(f)^2$ and satisfies $x\leftrightarrow f(x)$ for all $x\in CR(f)$. An equivalence class $C$ of $\leftrightarrow$ is called a {\em chain component} for $f$. We denote by $\mathcal{C}(f)$ the set of chain components for $f$.
\end{defi}

The basic properties of chain components are given in the following remark. A subset $S$ of $X$ is said to be {\em $f$-invariant} if $f(S)\subset S$.

\begin{rem}
\normalfont
The following properties hold
\begin{itemize}
\item $CR(f)=\bigsqcup_{C\in\mathcal{C}(f)}C$, a disjoint union,
\item every $C\in\mathcal{C}(f)$ is a closed $f$-invariant subset of $CR(f)$,
\item for all $C\in\mathcal{C}(f)$, $f|_C\colon C\to C$ is chain transitive, i.e., for any $x,y\in C$ and $\delta>0$, there is a $\delta$-chain $(x_i)_{i=0}^k$ of $f|_C$ with $x_0=x$ and $x_k=y$.
\end{itemize}
\end{rem}

\begin{rem}
\normalfont
Let $f\colon X\to X$ be a continuous map. 
\begin{itemize}
\item[(1)] It holds that $CR(f^i)=CR(f)$ for all $i\ge1$. 
\item[(2)] The following conditions are equivalent
\begin{itemize}
\item $f\colon CR(f)\to CR(f)$ is a homeomorphism,
\item $f^i\colon CR(f^i)\to CR(f^i)$ is a homeomorphism for all $i\ge1$,
\item $f^i\colon CR(f^i)\to CR(f^i)$ is a homeomorphism for some $i\ge1$.
\end{itemize}
\item[(3)] The following conditions are equivalent
\begin{itemize}
\item $\mathcal{C}(f)$ is a finite set,
\item $\mathcal{C}(f^i)$ is a finite set for all $i\ge1$,
\item $\mathcal{C}(f^i)$ is a finite set for some $i\ge1$.
\end{itemize}
\end{itemize}

(3) can be proved as follows. If $\mathcal{C}(f)$ is a finite set, then for all $i\ge1$, by $|\mathcal{C}(f^i)|\le i\cdot|\mathcal{C}(f)|$, $\mathcal{C}(f^i)$ is a finite set.  If $\mathcal{C}(f^i)$ is a finite set for some $i\ge1$, then by $|\mathcal{C}(f)|\le|\mathcal{C}(f^i)|$, $\mathcal{C}(f)$ is a finite set.
\end{rem}

Given a continuous map $f\colon X\to X$, we denote by $Per(f)$ the set of periodic points for $f$:
\[
Per(f)=\bigcup_{j\ge1}\{x\in X\colon f^j(x)=x\}.
\]
By Theorem 1.2 of \cite{K2}, we know that if $f$ has the $L$-Lipschitz shadowing property for some $0<L<1/2$, then $f$ satisfies $CR(f)=\overline{Per(f)}$. Let $0<L<1$ and $\delta_0>0$ be as in the definition of ball expanding.  By Remark 1.2, taking $i\ge1$ with $L^i<1/3$, we have that $f^i$ has the $M_i$-Lipschitz shadowing property, where $M_i=L^i/(1-L^i)<1/2$. It follows that
\[
CR(f^i)=\overline{Per(f^i)}.
\]
By $CR(f^i)=CR(f)$ and $Per(f^i)=Per(f)$, we obtain $CR(f)=\overline{Per(f)}$. In other words, we obtain the following theorem.

\begin{thm}
If a continuous map $f\colon X\to X$ is ball expanding, then $f$ satisfies $CR(f)=\overline{Per(f)}$.
\end{thm}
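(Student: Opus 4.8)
The plan is to reduce the statement to the contractive-shadowing criterion $CR = \overline{Per}$ already available in the literature, by passing to a suitable iterate of $f$. The obstruction to applying that criterion directly to $f$ is that the ball expanding condition with constant $L$ only yields, via Remark 1.2, the $M$-Lipschitz shadowing property with $M = L/(1-L)$, and this $M$ need not be smaller than $1/2$: indeed $M < 1/2$ holds precisely when $L < 1/3$, whereas a priori we only know $0 < L < 1$. Since Theorem 1.2 of \cite{K2} requires a Lipschitz shadowing constant strictly below $1/2$, the natural remedy is to replace $f$ by a high iterate, where the effective expansion constant is much smaller.

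Concretely, I would first fix an integer $i \ge 1$ with $L^i < 1/3$, which is possible since $0 < L < 1$. By Remark 1.2 the iterate $f^i$ then has the $M_i$-Lipschitz shadowing property with $M_i = L^i/(1-L^i)$, and the elementary estimate $L^i < 1/3$ gives $M_i < 1/2$. Applying Theorem 1.2 of \cite{K2} to $f^i$ yields
\[
CR(f^i) = \overline{Per(f^i)}.
\]
It then remains to transfer this identity back to $f$. For the chain recurrent set this is exactly Remark 1.4 (1), which gives $CR(f^i) = CR(f)$. For the periodic points, one checks directly from the definition that $Per(f^i) = Per(f)$: a point fixed by some power of $f^i$ is fixed by a power of $f$ and conversely, so the two periodic-point sets coincide as subsets of $X$. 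Substituting these two equalities into the displayed identity gives $CR(f) = \overline{Per(f)}$, as desired.

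I do not expect any genuine obstacle here, since the analytic heart of the argument is packaged inside Theorem 1.2 of \cite{K2}; the only points requiring care are the quantitative choice of $i$, so that the Lipschitz constant of $f^i$ drops below the threshold $1/2$, and the two invariance facts $CR(f^i) = CR(f)$ and $Per(f^i) = Per(f)$ used to pass between $f$ and its iterate. The one pitfall to guard against is an inequality slip in the bound $M_i < 1/2$: one should confirm that $L^i < 1/3$ is indeed the correct threshold, i.e.\ that $t/(1-t) < 1/2$ is equivalent to $t < 1/3$ for $t \in (0,1)$, which it is.
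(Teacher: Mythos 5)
Your proposal is correct and follows essentially the same route as the paper: pass to an iterate $f^i$ with $L^i<1/3$ so that $M_i=L^i/(1-L^i)<1/2$, apply Theorem 1.2 of \cite{K2} to $f^i$, and transfer back via $CR(f^i)=CR(f)$ and $Per(f^i)=Per(f)$. Your verification of the threshold $t/(1-t)<1/2\iff t<1/3$ is also correct.
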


For a continuous map $f\colon X\to X$, we denote by $h_{\rm top}(f)$ the topological entropy of $f$ (see, e.g., \cite{W} for details). By results of \cite{M}, if $f$ has the shadowing property, then $f$ satisfies $h_{\rm top}(f)=0$ if and only if
\[
f|_{CR(f)}\colon CR(f)\to CR(f)
\]
is an equicontinuous homeomorphism. We recall some related results from \cite{K2} (see Theorems 1.4 and 1.7 of \cite{K2}).

\begin{theorem}
\normalfont
\begin{itemize}
\item If a homeomorphism $f\colon X\to X$ has the $L$-Lipschitz shadowing property for some $0<L<1$, then $X$ is a finite set.
\item For $L>0$, if a  continuous map $f\colon X\to X$ has the $L$-Lipschitz shadowing property, then so does $f|_{CR(f)}\colon CR(f)\to CR(f)$.
\end{itemize} 
\end{theorem}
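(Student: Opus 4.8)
The two assertions are independent, so I would treat them separately.

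For the first assertion, my plan is to deduce that $f$ is positively expansive and then to invoke the classical fact that a positively expansive homeomorphism of a compact metric space forces the underlying space to be finite. To reach positive expansivity I would route through the expanding condition recalled in Remark 1.1: the heart of the matter is to show that a homeomorphism with the $L$-Lipschitz shadowing property for some $0<L<1$ is expanding, i.e.\ that on small scales $d(f(x),f(y))\ge L^{-1}d(x,y)$. Granting this, a homeomorphism is locally one-to-one, so the equivalence in Remark 1.1 makes $f$ expanding and open; an expanding map is positively expansive, since $d(f^i(x),f^i(y))\le\delta_0$ for all $i\ge0$ forces $d(x,y)\le L^i\delta_0\to0$; and the classical theorem then finishes. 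Extracting the metric expansion from the contractive shadowing property is the step I expect to be the main obstacle. The natural attempt is to feed the ``jump'' pseudo orbit $(y,f(x),f^2(x),\dots)$ into the $L$-Lipschitz shadowing, to use the factor $L<1$ to compare $d(x,y)$ with the shadowing error, and then to use the uniform continuity of $f^{-1}$ to close the estimate. The delicate point is that the contraction on its own only manufactures ever-closer pairs of distinct points whose forward orbits stay close, so it is precisely the invertibility of $f$ that must be exploited to exclude such pairs.

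For the second assertion, $CR(f)$ is a closed $f$-invariant set, so $f|_{CR(f)}$ is a continuous self-map of a compact space, and I would keep the same constant $\delta_0$. Given $0<\delta\le\delta_0$ and a $\delta$-pseudo orbit $\xi=(x_i)_{i\ge0}$ lying in $CR(f)$, it is in particular a $\delta$-pseudo orbit of $f$ in $X$, hence $L\delta$-shadowed by some $z\in X$; the whole forward orbit of $z$ then stays within $L\delta$ of $CR(f)$, so $\omega(z)\subseteq CR(f)$ and in fact lies in a single chain component. The only thing to prove is that the shadowing point can be taken inside $CR(f)$, and this relocation is the main obstacle, because the $z$ produced in $X$ need not be chain recurrent: a point attracted to a fixed point can $L\delta$-shadow the constant pseudo orbit sitting at that fixed point without itself belonging to $CR(f)$. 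To carry out the relocation I would exploit the chain recurrence of $x_0$: since $x_0\in CR(f)$, there is a $\delta$-chain from $x_0$ back to $x_0$, and by chain transitivity of its chain component (Remark 1.3) this chain can be taken inside $CR(f)$. Prepending $m$ copies of this return block to $\xi$, shadowing the resulting $CR(f)$-valued $\delta$-pseudo orbit, and reading off the shadow of $\xi$ by shifting past the prepended block, a compactness argument as $m\to\infty$ produces a limiting point that shadows $\xi$ within $L\delta$ and whose itinerary recurs arbitrarily often near $x_0$.

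The genuinely delicate part of the second argument is upgrading this recurrence of the limit point to membership in the closed set $CR(f)$, since chain recurrence demands return chains of every fineness whereas the construction above works at the single scale $\delta$; I would handle this by running the block construction simultaneously at a decreasing hierarchy of scales and extracting a single shadowing point by a diagonal selection. Finally I would record the routine checks: that $\delta_0$ transfers without loss, that the closed conditions $d(f^i(\cdot),x_i)\le L\delta$ are preserved under the limits taken, and that $CR(f|_{CR(f)})=CR(f)$, so that all chains invoked in the relocation may indeed be taken within $CR(f)$.
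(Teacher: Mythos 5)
Note first that the paper contains no proof of this statement: it is quoted verbatim from \cite{K2} (Theorems 1.4 and 1.7 there), so your attempt must stand on its own — and in both halves it stops short at precisely the step that carries all the content. For the first bullet, everything hinges on your bridge claim that a homeomorphism with the $L$-Lipschitz shadowing property, $0<L<1$, is expanding in the metric sense, and the tactic you sketch cannot close it. Feeding the jump pseudo orbit $(y,f(x),f^2(x),\dots)$ (with $\delta=d(f(x),f(y))$, say) into shadowing produces $z$ with $d(z,y)\le L\delta$ and $d(f^i(z),f^i(x))\le L\delta$ for $i\ge1$; to compare $d(x,y)$ with $\delta$ you must convert the bound $d(f(z),f(x))\le L\delta$ into a bound on $d(z,x)$, and uniform continuity of $f^{-1}$ supplies only a modulus with no Lipschitz control, so the crucial factor $L<1$ is destroyed and no expansion inequality emerges. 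Worse, the intermediate claim is essentially equivalent to the theorem itself: since the conclusion is that $X$ is finite, and on a finite space the expanding condition holds vacuously for small $\delta_0$, the implication ``contractive shadowing homeomorphism $\Rightarrow$ expanding'' is true only because both sides force finiteness; there is no fixed-scale, two-point derivation to be had. As you yourself concede, the construction only ``manufactures ever-closer pairs'' — that is an acknowledged gap, not a proof.

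The second bullet has the same character. Your block-prepending construction is sound as far as it goes: prepending $m$ copies of a $\delta$-return chain inside $CR(f)$, shadowing, shifting by the block length, and passing to a limit does yield $w$ with $d(f^i(w),x_i)\le L\delta$ for all $i\ge0$, hence $d(f^i(w),CR(f))\le L\delta$. But the proposed repair — running the blocks at a decreasing hierarchy of scales and diagonalizing — does not work, because the shadowing error of the concatenated pseudo orbit is governed by its worst jump, which is the $\delta$ coming from $\xi$ itself; refining the return blocks to scale $\delta_n\to0$ still only produces points $L\delta$-close to $CR(f)$, so the proximity never drops below $L\delta$, closedness of $CR(f)$ cannot be invoked on the limit, and the recurrence you extract lives only at the single scale $\approx\delta$ (concretely, the limit point has long exact backward histories returning $2L\delta$-near itself, which gives $\epsilon$-chains from $w$ to $w$ only for $\epsilon$ comparable to the modulus of continuity of $f$ at scale $L\delta$ — coarse recurrence, not chain recurrence, which demands return chains at every scale). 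Note also that a periodic closing of $\xi$ is unavailable, since a pseudo orbit in $CR(f)$ may descend strictly in the chain order, so there need be no chain from $x_k$ back to $x_0$; and your parenthetical ``routine check'' that $CR(f|_{CR(f)})=CR(f)$ is in fact a genuine theorem of Conley theory, not an observation. So both assertions remain unproved at exactly the spots you flagged as delicate, and for both the honest conclusion is that a substantively different mechanism (as in \cite{K2}) is needed to relocate the argument from the fixed scale $\delta$ to all scales.
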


From these facts, it follows that if a continuous map $f\colon X\to X$ has the $L$-Lipschitz shadowing property for some $0<L<1$, then the following conditions are equivalent
\begin{itemize}
\item $h_{\rm top}(f)=0$,
\item $CR(f)$ is a finite set,
\item $f|_{CR(f)}\colon CR(f)\to CR(f)$ is a homeomorphism.
\end{itemize}
Let $0<L<1$ and $\delta_0>0$ be as in the definition of ball expanding.  As in Remark 1.2, by taking $i\ge1$ with $L^i<1/2$, we have that $f^i$ has the $M_i$-Lipschitz shadowing property, where $M_i=L^i/(1-L^i)<1$. Note that $h_{\rm top}(f^i)=i\cdot h_{\rm top}(f)$. By the above equivalence, we obtain the following theorem (see also Remark 1.4).

\begin{thm}
If a continuous map $f\colon X\to X$ is ball expanding, then the following conditions are equivalent
\begin{itemize}
\item[(1)] $h_{\rm top}(f)=0$,
\item[(2)] $CR(f)$ is a finite set,
\item[(3)] $f|_{CR(f)}\colon CR(f)\to CR(f)$ is a homeomorphism.
\end{itemize}
\end{thm}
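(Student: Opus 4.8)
The plan is to reduce the statement about the ball expanding map $f$ to the same three-way equivalence for a suitable iterate $f^i$, for which the hypotheses are already strong enough to invoke the imported results, and then to transfer each of the three conditions back to $f$ using the iteration-invariance recorded in the earlier remarks. Concretely, since $0<L<1$ we may fix $i\ge1$ with $L^i<1/2$; by Remark 1.2 the map $f^i$ then has the $M_i$-Lipschitz shadowing property with $M_i=L^i/(1-L^i)<1$. Thus $f^i$ is a continuous map enjoying the $L'$-Lipschitz shadowing property for some $0<L'<1$, which is exactly the setting in which the equivalence stated just before the theorem applies.

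The substantive step is to establish, for any continuous $g\colon X\to X$ with the $L'$-Lipschitz shadowing property ($0<L'<1$), that the three conditions $h_{\rm top}(g)=0$, $CR(g)$ finite, and $g|_{CR(g)}$ a homeomorphism are equivalent. I would run the cycle $(1)\Rightarrow(3)\Rightarrow(2)\Rightarrow(1)$ as follows. First note that $L'$-Lipschitz shadowing implies the ordinary shadowing property (shrink $\delta$ so that $L'\delta$ is as small as desired), so the result of \cite{M} applies and gives that $h_{\rm top}(g)=0$ iff $g|_{CR(g)}$ is an equicontinuous homeomorphism; in particular $(1)\Rightarrow(3)$. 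For $(3)\Rightarrow(2)$, the second result of \cite{K2} recalled above shows $g|_{CR(g)}$ again has the $L'$-Lipschitz shadowing property, and since it is now assumed to be a homeomorphism with $L'<1$, the first result of \cite{K2} forces $CR(g)$ to be finite. Finally $(2)\Rightarrow(1)$: when $CR(g)$ is finite each chain component is a single periodic orbit, so $g|_{CR(g)}$ is a bijection of a finite set, hence an equicontinuous homeomorphism, and \cite{M} then returns $h_{\rm top}(g)=0$. This closes the equivalence.

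It remains to transfer the three conditions between $f$ and $f^i$. For (1) one uses $h_{\rm top}(f^i)=i\cdot h_{\rm top}(f)$, so $h_{\rm top}(f^i)=0$ iff $h_{\rm top}(f)=0$. For (2) one uses $CR(f^i)=CR(f)$ from Remark 1.4, so the two chain recurrent sets are literally equal and finiteness transfers at once. For (3) one uses the equivalence in Remark 1.4 that $f|_{CR(f)}$ is a homeomorphism iff $f^i|_{CR(f^i)}$ is. Combining these with the equivalence proved for $g=f^i$ yields the theorem. I expect the only real obstacle to be the intermediate equivalence for contractive-shadowing maps, and within it the step $(3)\Rightarrow(2)$ that converts ``homeomorphism on $CR$'' into finiteness via the two \cite{K2} theorems; a minor point to watch is reconciling the equicontinuous homeomorphism appearing in the \cite{M} characterization with the bare homeomorphism in condition (3), which is harmless precisely because finiteness of $CR$ makes equicontinuity automatic.
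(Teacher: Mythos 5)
Your proposal is correct and follows essentially the same route as the paper: pass to an iterate $f^i$ with $L^i<1/2$ so that $M_i=L^i/(1-L^i)<1$, apply the equivalence for maps with contractive Lipschitz shadowing (derived from the results of \cite{M} and \cite{K2}), and transfer the three conditions back via $h_{\rm top}(f^i)=i\cdot h_{\rm top}(f)$, $CR(f^i)=CR(f)$, and the homeomorphism equivalence in Remark 1.4. The only difference is that you spell out the cycle $(1)\Rightarrow(3)\Rightarrow(2)\Rightarrow(1)$, including the observation that finiteness of $CR(g)$ makes each chain component a periodic orbit and renders equicontinuity automatic, which the paper states only implicitly.
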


By Theorem 1.8 of \cite{K2}, we know that if $f$ has the $L$-Lipschitz shadowing property for some $0<L<1$, then $\mathcal{C}(f)$ is a finite set. By a similar argument as above, we obtain the following theorem (see also Remark 1.4).

\begin{thm}
If a continuous map $f\colon X\to X$ is ball expanding, then $\mathcal{C}(f)$ is a finite set.
\end{thm}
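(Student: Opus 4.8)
The plan is to mirror exactly the template used for Theorems 1.1 and 1.2: reduce the statement to the corresponding result for contractive Lipschitz shadowing applied to a suitable iterate of $f$, and then descend the conclusion back to $f$ itself via Remark 1.4 (3). Since every substantial ingredient is available as an imported black box, the argument is essentially a bookkeeping of constants.

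First I would pass to an iterate. Since $f$ is ball expanding, fix $0<L<1$ and $\delta_0>0$ as in the definition. By Remark 1.2, for every $i\ge1$ the iterate $f^i$ has the $M_i$-Lipschitz shadowing property with $M_i=L^i/(1-L^i)$. Choosing $i\ge1$ large enough that $L^i<1/2$ forces $M_i<1$, so $f^i$ has the $M_i$-Lipschitz shadowing property for some $0<M_i<1$. Next I would invoke the cited result: Theorem 1.8 of \cite{K2} asserts that any continuous map with the $L$-Lipschitz shadowing property for some $0<L<1$ has only finitely many chain components, and applying it to $f^i$ yields that $\mathcal{C}(f^i)$ is a finite set. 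Finally I would transfer back to $f$: by the equivalence recorded in Remark 1.4 (3), finiteness of $\mathcal{C}(f^i)$ for this single value of $i$ implies finiteness of $\mathcal{C}(f)$, using the bound $|\mathcal{C}(f)|\le|\mathcal{C}(f^i)|$ stated there. This completes the proof.

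I do not anticipate a genuine obstacle here, since the nontrivial work is entirely outsourced (Remark 1.2 supplies the Lipschitz shadowing for iterates, Theorem 1.8 of \cite{K2} supplies finiteness of chain components under contractive shadowing, and Remark 1.4 (3) supplies the descent from $f^i$ to $f$). The only point demanding care is the threshold on $i$: one must verify $L^i<1/2$ so that $M_i=L^i/(1-L^i)<1$, placing $f^i$ squarely in the hypothesis range of Theorem 1.8 of \cite{K2}. This is the same threshold as in Theorem 1.2, and contrasts with the stricter $L^i<1/3$ needed in Theorem 1.1.
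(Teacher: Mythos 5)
Your proposal is correct and coincides with the paper's own argument: the paper likewise passes to an iterate $f^i$ with $L^i<1/2$ so that $M_i=L^i/(1-L^i)<1$, applies Theorem 1.8 of \cite{K2} to conclude $\mathcal{C}(f^i)$ is finite, and descends to $\mathcal{C}(f)$ via Remark 1.4 (3). Your remark about the $L^i<1/2$ threshold (versus $L^i<1/3$ for Theorem 1.1) is exactly the bookkeeping the paper performs.
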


Following \cite{AHK}, we define the {\em terminal} chain components as follows.

\begin{defi}
\normalfont
Given a continuous map $f\colon X\to X$, we say that a closed $f$-invariant subset $S$ of $X$ is {\em chain stable} if for any $\epsilon>0$, there is $\delta>0$ such that every $\delta$-chain $(x_i)_{i=0}^k$ of $f$ with $x_0\in S$ satisfies $d(x_k,S)=\inf_{y\in S}d(x_k,y)\le\epsilon$. We say that $C\in\mathcal{C}(f)$ is {\em terminal} if $C$ is chain stable. We denote by $\mathcal{C}_{\rm ter}(f)$ the set of terminal chain components for $f$.
\end{defi}

\begin{rem}
\normalfont
\begin{itemize}
\item For a continuous map $f\colon X\to X$ and $x\in X$, the {\em $\omega$-limit set} $\omega(x,f)$ of $x$ for $f$ is defined as the set of $y\in X$ such that $\lim_{j\to\infty}f^{i_j}(x)=y$ for some sequence $0\le i_1<i_2<\cdots$. Note that $\omega(x,f)$ is a closed $f$-invariant subset of $X$ and satisfies
\[
\lim_{i\to\infty}d(f^i(x),\omega(x,f))=0.
\]
Since we have $y\rightarrow z$ for all $y,z\in\omega(x,f)$, there is an unique $C(x,f)\in\mathcal{C}(f)$ such that $\omega(x,f)\subset C(x,f)$ and so
\[
\lim_{i\to\infty}d(f^i(x),C(x,f))=0.
\]
\item Let $f\colon X\to X$ be a continuous map and let $S$ be a closed $f$-invariant subset of $X$. We see that $S$ is chain stable if and only if $x\rightarrow y$ implies $y\in S$ for all $x\in S$ and $y\in X$.
\item A partial order $\le$ on $\mathcal{C}(f)$ can be defined by: for any $C,D\in\mathcal{C}(f)$, $C\le D$ if only if there are $x\in C$ and $y\in D$ such that $x\rightarrow y$. We easily see that for any $C\in\mathcal{C}(f)$, $C\in\mathcal{C}_{\rm ter}(f)$ if and only if $C$ is maximal with respect to $\le$.
\end{itemize}
\end{rem}

We can show that every continuous map $f\colon X\to X$ satisfies $\mathcal{C}_{\rm ter}(f)\ne\emptyset$ (see, e.g., Lemma 2.1 and Corollary 2.1 of \cite{K1}), but this fact is fairly obvious when $\mathcal{C}(f)$ is a finite set.

In order to prove Theorems 1.4 and 1.5, let us first prove the following lemma.

\begin{lem}
Let $f\colon X\to X$ be a continuous map. Let $0<L<1$ and $\delta_0>0$ be as in the definition of ball expanding. For any $C\in\mathcal{C}(f)$ and $x\in X$, if $d(x,C)\le\delta_0$ and
\[
\lim_{i\to\infty}d(f^i(x),C)=0,
\]
then $x\in C$.
\end{lem}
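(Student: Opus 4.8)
The plan is to combine the elementary fact that the forward orbit of $x$ accumulates on $C$ with a backward construction built from the iterated ball expanding inequality of Remark 1.2, the point being to manufacture actual $f$-orbits that land in $C$ while starting arbitrarily close to $x$. Since $\lim_{i\to\infty}d(f^i(x),C)=0$ and $C$ is closed, every subsequential limit of $(f^i(x))_{i\ge 0}$ lies in $C$, so $\omega(x,f)\subseteq C$; by Remark 1.5 the unique chain component containing $\omega(x,f)$ is $C$, whence $x\rightarrow c$ for every $c\in C$. Thus the whole difficulty is the opposite relation: one must rule out that $x$ sits strictly upstream of $C$.

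The engine is a coherent pullback. Choose $i$ so large that $a_i:=d(f^i(x),C)\le\delta_0$, pick $c_i\in C$ with $d(f^i(x),c_i)=a_i$, and pull $c_i$ back one coordinate at a time: applying the one-step inclusion $B_\delta(f(y))\subseteq f(B_{L\delta}(y))$ at $y=f^{i-1}(x),f^{i-2}(x),\dots,x$ with the radii $a_i,La_i,L^2a_i,\dots$, which never exceed $\delta_0$, yields a genuine orbit segment $w_i,f(w_i),\dots,f^i(w_i)=c_i$ together with the uniform estimate $d(f^j(w_i),f^j(x))\le L^{\,i-j}a_i$ for $0\le j\le i$. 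In particular $d(w_i,x)\le L^i a_i\le L^i\delta_0\to 0$, so $w_i\to x$, while $f^i(w_i)=c_i\in C$.

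Granting that each $w_i$ (for large $i$) is chain recurrent, the lemma follows quickly. Indeed, if $w_i\in CR(f)$ then $w_i$ lies in some $C'\in\mathcal{C}(f)$, which is $f$-invariant by Remark 1.3, so $c_i=f^i(w_i)\in C'$; since the chain components are pairwise disjoint, $c_i\in C'\cap C$ forces $C'=C$, i.e. $w_i\in C$. As $C$ is closed and $w_i\to x$, this gives $x\in C$.

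The main obstacle is therefore to show $w_i\in CR(f)$, equivalently to produce, for every $\delta>0$, a $\delta$-chain from $c_i$ back to $w_i$: the orbit segment already gives $w_i\rightarrow c_i$, and since $f|_C$ is chain transitive it is enough to chain from $C$ out to $w_i$. This reverse, ``climbing'' direction is exactly where ball expansion must be used in earnest---the covering property $B_\delta(f(\cdot))\subseteq f(B_{L\delta}(\cdot))$ lets one realise prescribed nearby targets as images and so steer a pseudo-orbit outward from $C$ towards $w_i$---and the hard point is that $f$ is not assumed locally injective, so the branches are uncontrolled and one must guarantee that the climbing chain actually reaches $w_i$ without straying farther than $\delta_0$ from $C$. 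I would control this using the closeness estimate $d(f^j(w_i),f^j(x))\le L^{\,i-j}a_i$ together with $a_j\to 0$, and, if needed, the finiteness of $\mathcal{C}(f)$ from Theorem 1.3 to keep the bookkeeping finite.
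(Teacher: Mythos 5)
Your ``engine'' is correct as a computation, but it pulls back in the wrong direction, and the step you explicitly defer --- showing $w_i\in CR(f)$ --- is the entire content of the lemma, left unproved. Your construction takes the orbit of $x$ as the base and realises targets $c_i\in C$ as $f^i(w_i)$ with $w_i$ close to $x$; this produces exact orbit segments running from near $x$ \emph{into} $C$, which only refines the easy relation $x\rightarrow v$, $v\in C$, that you had already extracted from $\omega(x,f)\subset C$. What the lemma actually needs is the opposite: orbit segments (or chains) running from near $C$ \emph{to} $x$, giving $w\rightarrow x$ for some $w\in C$. Your closing sketch (``steer a pseudo-orbit outward from $C$ towards $w_i$'') misreads what ball expansion provides: the inclusion $B_\delta(f(y))\subset f(B_{L\delta}(y))$ is a preimage-selection property --- it lets you choose a preimage near $y$ of any prescribed point near $f(y)$ --- not a device for steering forward orbits; and neither the estimate $d(f^j(w_i),f^j(x))\le L^{i-j}a_i$ nor the finiteness of $\mathcal{C}(f)$ supplies the missing chains from $c_i$ back out to $w_i$. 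As written, the proposal establishes nothing beyond $x\rightarrow C$.

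The paper closes exactly this gap by pulling back $x$ itself along a backward orbit \emph{inside} $C$. Since $f|_C$ is chain transitive and $C$ is compact, $f(C)=C$; choosing $z\in C$ with $d(x,z)\le\delta_0$, one gets $y_i\in C$ with $f^i(y_i)=z$ for every $i\ge0$. Applying the iterated inclusion $B_{\delta_0}(f^i(y_i))\subset f^i(B_{L^i\delta_0}(y_i))$ to $x\in B_{\delta_0}(z)$ yields $z_i$ with $f^i(z_i)=x$ and $d(z_i,C)\le L^i\delta_0\to0$. Prefixing the exact orbit segment $(z_i,f(z_i),\dots,f^i(z_i)=x)$ with one jump of size at most $L^i\delta_0$ from a point of $C$ produces $\delta$-chains from $C$ to $x$ for every $\delta>0$, hence $w\rightarrow x$ for some (equivalently, by chain transitivity of $C$, every) $w\in C$. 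Combined with your $x\rightarrow v$, $v\in C$, this gives $x\rightarrow x$ and $x\leftrightarrow w$, so $x\in C$. Note that the hypothesis $d(x,C)\le\delta_0$ is used precisely to fit $x$ into the ball $B_{\delta_0}(z)$ being pulled back; in your version it enters only through $a_i\le\delta_0$, another sign the construction is aimed at the easy half of the equivalence rather than the hard one.
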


\begin{proof}
Note that for any $i\ge0$, we have 
\[
B_{\delta_0}(f^i(y))\subset f^i(B_{L^i\delta_0}(y))
\]
for all $y\in X$. Since $d(x,C)\le\delta_0$, we have $d(x,z)\le\delta_0$ for some $z\in C$. We take a sequence $y_i\in C$, $i\ge0$, such that $z=f^i(y_i)$ for all $i\ge0$. For every $i\ge0$, we obtain
\[
x\in B_{\delta_0}(z)=B_{\delta_0}(f^i(y_i))\subset f^i(B_{L^i\delta_0}(y_i))
\]
and so $x=f^i(z_i)$ for some $z_i\in B_{L^i\delta_0}(y_i)$. Since $d(z_i,C)\le d(z_i,y_i)\le L^i\delta_0$ for all $i\ge0$, we have
\[
\lim_{i\to\infty}d(z_i,C)=0.
\]
It follows that $w\rightarrow x$ for some $w\in C$. On the other hand, since
\[
\lim_{i\to\infty}d(f^i(x),C)=0,
\]
we have $x\rightarrow v$ for some $v\in C$. By $v,w\in C$, we obtain $v\rightarrow w$. This with $x\rightarrow v$ implies $x\rightarrow w$. It follows that $x\rightarrow x$ and $x\leftrightarrow w$; therefore, $x\in C$. This completes the proof of the lemma.
\end{proof}

In the next remark, by using Lemma 1.1, we prove a non-trivial property of ball expanding maps.

\begin{rem}
\normalfont
Let $f\colon X\to X$ be a continuous map. Let $0<L<1$ and $\delta_0>0$ be as in the definition of ball expanding. For any $x\in X$, we take $C(x,f)\in\mathcal{C}(f)$ such that $\omega(x,f)\subset C(x,f)$ and so
\[
\lim_{i\to\infty}d(f^i(x),C(x,f))=0.
\]
Note that $d(f^i(x),C(x,f))\le\delta_0$ for some $i\ge0$. By Lemma 1.1, we obtain $f^i(x)\in C(x,f)$. Since $x\in X$ is arbitrary, we conclude that
\[
X=\bigcup_{i\ge0}f^{-i}(CR(f))
\]
(see also Example 2.1 in Section 2).
\end{rem}

By Theorem 1.3 and Lemma 1.1, we obtain the following lemma.

\begin{lem}
If a continuous map $f\colon X\to X$ is ball expanding, then every $C\in\mathcal{C}_{\rm ter}(f)$ is a clopen subset of $X$.
\end{lem}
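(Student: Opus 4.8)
The plan is to prove the two halves of ``clopen'' separately. By Remark 1.3 every chain component is a closed $f$-invariant subset of $CR(f)$, so $C$ is automatically closed and the entire content of the lemma is the \emph{openness} of $C$. I would reduce openness to the single inclusion $B_{\delta_0}(C)\subset C$, where I write $B_{\delta_0}(C)=\{y\in X\colon d(y,C)\le\delta_0\}$ and $\delta_0$ is the constant from the definition of ball expanding. Indeed, once this inclusion holds, every $c\in C$ satisfies $B_{\delta_0}(c)\subset B_{\delta_0}(C)\subset C$, so $C$ contains a neighborhood of each of its points and is therefore open.

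To prove $B_{\delta_0}(C)\subset C$, I would fix $y\in X$ with $d(y,C)\le\delta_0$ and aim at $y\in C$. Here I would reuse the first half of the proof of Lemma 1.1: since $C$ is compact (closed in the compact space $X$) there is $z\in C$ with $d(y,z)\le\delta_0$, and since $f|_C\colon C\to C$ is onto one may choose $y_i\in C$ with $z=f^i(y_i)$ for every $i\ge0$. The ball expanding inclusion $B_{\delta_0}(f^i(y_i))\subset f^i(B_{L^i\delta_0}(y_i))$ then produces $z_i\in B_{L^i\delta_0}(y_i)$ with $y=f^i(z_i)$, and $d(z_i,C)\le L^i\delta_0\to0$. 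Thus $y$ admits backward orbits whose base points approach $C$, which is exactly the data needed to build chains from $C$ into $y$.

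The key step is to upgrade this to the relation $w\rightarrow y$ for some fixed $w\in C$. Given $\delta>0$, I would choose $i$ so large that $L^i\delta_0<\delta$ and, by uniform continuity of $f$, so that $d(f(c_i),f(z_i))\le\delta$ for a point $c_i\in C$ nearest to $z_i$; then $c_i,f(z_i),f^2(z_i),\dots,f^i(z_i)=y$ is a $\delta$-chain from $c_i\in C$ to $y$. Fixing any $w\in C$ and prepending a $\delta$-chain from $w$ to $c_i$ inside $C$, which exists because $f|_C$ is chain transitive (Remark 1.3), yields a $\delta$-chain from $w$ to $y$; as $\delta>0$ is arbitrary, $w\rightarrow y$. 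Finally, since $C$ is terminal it is chain stable, so the second bullet of Remark 1.5 turns $w\rightarrow y$ with $w\in C$ into $y\in C$, which proves $B_{\delta_0}(C)\subset C$ and hence that $C$ is clopen.

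I expect the main obstacle to be the bookkeeping in this key step: the starting points $c_i$ of the constructed chains depend on $i$ (hence on $\delta$), so $w\rightarrow y$ for a fixed $w$ does not follow from the backward-orbit estimate alone. The chain transitivity of $f|_C$ is precisely what repairs this, allowing every chain to be routed through one fixed $w\in C$. A secondary point to watch is the surjectivity of $f|_C$, used to select the preimages $y_i\in C$; this is the standard fact that $f(C)=C$ for a chain component, already invoked implicitly in the proof of Lemma 1.1.
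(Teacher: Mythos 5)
Your proof is correct, but it follows a genuinely different route from the paper's. The paper deduces the lemma from two previously established results: Theorem 1.3 (finiteness of $\mathcal{C}(f)$, which rests on the contractive shadowing machinery of \cite{K2}) is used to produce a $\delta_1>0$ such that every point within $\delta_1$ of the terminal component $C$ satisfies $\lim_{i\to\infty}d(f^i(x),C)=0$, and then Lemma 1.1 --- whose hypotheses require both $d(x,C)\le\delta_0$ \emph{and} this forward convergence --- yields $x\in C$. You instead bypass Theorem 1.3 and the forward-orbit hypothesis entirely: you keep only the backward-orbit half of Lemma 1.1's proof (exact $i$-th preimages $z_i$ of $y$ with $d(z_i,C)\le L^i\delta_0$, obtained from surjectivity of $f|_C$ and the iterated inclusion $B_{\delta_0}(f^i(y_i))\subset f^i(B_{L^i\delta_0}(y_i))$ of Remark 1.2), convert it via uniform continuity into $\delta$-chains from $C$ to $y$, route them through a fixed $w\in C$ by chain transitivity of $f|_C$, and then invoke terminality directly, through the characterization of chain stability in Remark 1.5, to conclude $y\in C$. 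What the paper's route buys is brevity by reuse of its own theorems; what yours buys is independence from the finiteness of $\mathcal{C}(f)$ and a quantitatively uniform conclusion $B_{\delta_0}(C)\subset C$, with $\delta_0$ not depending on $C$ --- in spirit your argument is closer to Lemmas B.1 and B.2 of Appendix B, which likewise produce clopen neighborhoods of chain stable sets without any finiteness input. Two small remarks: the detour through a fixed $w$ is actually avoidable, since the definition of chain stability already allows the starting point of the $\delta$-chain to vary within $C$ (for each $\epsilon$ take the corresponding $\delta$, get $d(y,C)\le\epsilon$, and let $\epsilon\to0$ using that $C$ is closed); and your appeal to $f(C)=C$ is at exactly the level of rigor of the paper itself, which uses the same fact without comment in the proof of Lemma 1.1.
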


\begin{proof}
Since $f$ is ball expanding, by Theorem 1.3, $\mathcal{C}(f)$ is a finite set. This with $C\in\mathcal{C}_{\rm ter}(f)$ implies the existence of $\delta_1>0$ such that every $x\in X$ with $d(x,C)\le\delta_1$ satisfies
\[
\lim_{i\to\infty}d(f^i(x),C)=0.
\]
Let $0<L<1$ and $\delta_0>0$ be as in the definition of ball expanding. Then, for any $x\in X$ with $d(x,C)\le\min\{\delta_0,\delta_1\}$, by Lemma 1.1, we obtain $x\in C$. This implies that $C$ is clopen in $X$, thus the lemma has been proved.
\end{proof}

The space $X$ is said to be {\em perfect} if $X$ has no isolated points, i.e., $x\in\overline{X\setminus\{x\}}$ for all $x\in X$. By Theorem 1.2 and Lemma 1.2, we obtain the following theorem.

\begin{thm}
If $X$ is perfect and if a continuous map $f\colon X\to X$ is ball expanding, then $h_{\rm top}(f)>0$.
\end{thm}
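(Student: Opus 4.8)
The plan is to prove the contrapositive form of the statement by reducing, via Theorem 1.2, to the situation where $CR(f)$ is finite, and then extracting an isolated point of $X$ from a terminal chain component, which will contradict perfectness. Since topological entropy is always nonnegative, it suffices to rule out $h_{\rm top}(f)=0$, so I would begin by assuming $h_{\rm top}(f)=0$ and aiming for a contradiction.

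First I would apply Theorem 1.2. Because $f$ is ball expanding and $h_{\rm top}(f)=0$, condition (1) of that theorem holds, so condition (2) gives that $CR(f)$ is a finite set. Next I would use the fact, recorded in the discussion following Remark 1.6, that every continuous map satisfies $\mathcal{C}_{\rm ter}(f)\neq\emptyset$; this is the step where the nonemptiness of $X$ (implicit in perfectness being a meaningful hypothesis) is used. Fix any $C\in\mathcal{C}_{\rm ter}(f)$. Then $C$ is a nonempty subset of $CR(f)$, hence finite, and by Lemma 1.2 (which applies precisely because $f$ is ball expanding) the set $C$ is clopen in $X$.

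The final step combines finiteness and clopenness. A finite subset of a metric space is discrete in its subspace topology, so each point of $C$ is open in $C$; as $C$ is open in $X$, each such point is then open in $X$, i.e.\ isolated. Since $C$ is nonempty, $X$ possesses an isolated point, contradicting the assumption that $X$ is perfect. Hence $h_{\rm top}(f)=0$ is impossible and $h_{\rm top}(f)>0$.

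I do not expect a genuine obstacle here, since the substantive content is already packaged into Theorem 1.2 and Lemma 1.2; the only points demanding care are ensuring that the chosen terminal component is nonempty (so that an actual isolated point, rather than a vacuous one, is produced) and spelling out why a finite clopen subset of $X$ consists of isolated points of the ambient space. These are routine once the two cited results are in hand.
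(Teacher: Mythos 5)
Your proof is correct and takes essentially the same route as the paper's: fix a terminal chain component $C\in\mathcal{C}_{\rm ter}(f)$, get clopenness from Lemma 1.2 and finiteness from Theorem 1.2 under the assumption $h_{\rm top}(f)=0$, and conclude that the points of $C$ would be isolated in $X$, contradicting perfectness. The only (harmless) difference is that you spell out the nonemptiness of $\mathcal{C}_{\rm ter}(f)$, which the paper invokes implicitly when it writes ``Let $C\in\mathcal{C}_{\rm ter}(f)$.''
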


\begin{proof}
Let $C\in\mathcal{C}_{\rm ter}(f)$. Since $f$ is ball expanding, by Lemma 1.2, $C$ is a clopen subset of $X$. If $h_{\rm top}(f)=0$, then Theorem 1.2 implies that $C$ is a finite set and therefore every $x\in C$ is an isolated point of $X$. Since $X$ is perfect, we conclude that $h_{\rm top}(f)>0$, completing the proof.
\end{proof}

Let us recall here the definition of the h-shadowing property.

\begin{defi}
\normalfont
A continuous map $f\colon X\to X$ is said to have the {\em h-shadowing property} if for any $\epsilon>0$, there is $\delta>0$ such that every $\delta$-chain $(x_i)_{i=0}^k$ of $f$ satisfies
\[
\sup_{0\le i\le k-1}d(f^i(x),x_i)\le\epsilon
\]
and $f^k(x)=x_k$ for some $x\in X$.
\end{defi}

A continuous map $f\colon X\to X$ is said to be {\em mixing} if for any non-empty open subsets $U,V$ of $X$, there is $i\ge0$ such that $f^j(U)\cap V\ne\emptyset$ for all $j\ge i$. We say that $f$ is {\em locally eventually onto} if every non-empty open subset $U$ of $X$ satisfies $f^i(U)=X$ for some $i\ge0$. We easily see that if $f$ is locally eventually onto, then $f$ is mixing, and the converse holds when $f$ has the h-shadowing property.

\begin{rem}
\normalfont
Let $f\colon X\to X$ be a continuous map. We say that $f$ is {\em chain mixing} if for any $x,y\in X$ and $\delta>0$, there is $j>0$ such that for each $k\ge j$, a $\delta$-chain $(x_i)_{i=0}^k$ of $f$ satisfies $x_0=x$ and $x_k=y$. If $f$ is mixing, then $f$ is chain mixing, and the converse holds when $f$ has the shadowing property. By Corollary 14 of \cite{RW}, we know that if $X$ is connected and $f$ is chain transitive, i.e., $\mathcal{C}(f)=\{X\}$, then $f$ is chain mixing. 
\end{rem}

We shall present the final result of this section (see Appendix B for a generalization). 

\begin{thm}
If $X$ is connected and if a continuous map $f\colon X\to X$ is ball expanding, then $f$ is locally eventually onto.
\end{thm}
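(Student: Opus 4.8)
The plan is to show first that connectedness collapses the chain recurrent structure to a single chain component, and then to run the statement through the chain of implications recorded in the remarks above. For the first step I would invoke $\mathcal{C}_{\mathrm{ter}}(f)\ne\emptyset$, which holds here because $\mathcal{C}(f)$ is finite by Theorem 1.3. Fix any $C\in\mathcal{C}_{\mathrm{ter}}(f)$. By Lemma 1.2, $C$ is a clopen subset of $X$, and it is nonempty since it is a chain component. As $X$ is connected, this forces $C=X$. Because the chain components partition $CR(f)$ and are pairwise disjoint, it follows that $\mathcal{C}(f)=\{X\}$ and $CR(f)=X$; in other words, $f$ is chain transitive.

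Next I would promote chain transitivity to chain mixing: with $X$ connected and $f$ chain transitive, Remark 1.7 (resting on Corollary 14 of \cite{RW}) yields that $f$ is chain mixing. To pass from chain mixing to genuine mixing I would use shadowing. Since $f$ is ball expanding, Remark 1.2 gives that $f$ has the $M$-Lipschitz shadowing property with $M=L/(1-L)$, and in particular $f$ has the shadowing property. The converse direction noted in Remark 1.7 then shows that a chain mixing map with the shadowing property is mixing, so $f$ is mixing. Finally, $f$ has the h-shadowing property by Remark 1.1, and a mixing map with the h-shadowing property is locally eventually onto, as observed just before Remark 1.7. Chaining these implications gives the conclusion.

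The proof is thus mostly an assembly of earlier results, and the only genuinely new input is the first step. The point I would watch most carefully there is that deducing chain transitivity from connectedness relies essentially on the ball expanding hypothesis: it is Lemma 1.2 (via the finiteness of $\mathcal{C}(f)$ from Theorem 1.3) that makes the terminal chain component clopen, and without clopenness connectedness alone could not rule out several chain components. Once $f$ is known to be chain transitive, the remaining steps are direct applications of the equivalences between mixing, chain mixing, and the locally eventually onto property under the (h-)shadowing hypotheses already available.
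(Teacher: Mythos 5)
Your proposal is correct and follows essentially the same route as the paper's proof: pick a terminal chain component, apply Lemma 1.2 and connectedness to conclude $C=X$, invoke Corollary 14 of \cite{RW} for chain mixing, and finish using the shadowing and h-shadowing properties of ball expanding maps. You merely make explicit the intermediate implications (chain mixing plus shadowing gives mixing, mixing plus h-shadowing gives locally eventually onto) that the paper compresses into its final sentence.
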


\begin{proof}
We take $C\in\mathcal{C}_{\rm ter}(f)$. Since $f$ is ball expanding, by Lemma 1.2, $C$ is a clopen subset of $X$. Since $X$ is connected, it follows that $X=C$. By Corollary 14 of \cite{RW}, $f$ is chain mixing. Since $f$ is ball expanding, $f$ has the shadowing property and the h-shadowing property. By these conditions, we conclude that $f$ is locally eventually onto, completing the proof of the theorem.
\end{proof}

This paper consists of two sections and two appendices. In the next section, we give several examples related to the main results. 

\section{Examples}

In this section, we present several examples concerning ball expanding maps and the main results in Section 1.

\begin{ex}
\normalfont
Let $X=\{0\}\cup\{2^{-n}\colon n\ge0\}$ and note that $X\subset[0,1]$. We define a surjective continuous map $f\colon X\to X$ by
\begin{itemize}
\item $f(1)=1$,
\item $f(x)=2x$ for all $x\in X\setminus\{1\}$.
\end{itemize}

We shall prove that $f$ is ball expanding.

\begin{proof}
Note that $y=f(2^{-1}y)$ for all $y\in X$. Let $0<\delta<2^{-1}$. It is sufficient to show that for any $x\in X$ and $y\in B_\delta(f(x))\setminus\{f(x)\}$, we have $y=f(z)$ for some $z\in B_{2^{-1}\delta}(x)$. If $x=1$, then $B_\delta(f(x))=B_\delta(1)=\{1\}$ and so $B_\delta(f(x))\setminus\{f(x)\}=\emptyset$. If $x\ne1$, then we have $f(x)=2x$. Letting $z=2^{-1}y$, we have $y=f(z)$. By
\[
|f(x)-y|=|2x-y|\le\delta,
\]
we obtain
\[
|x-z|=|x-2^{-1}y|\le2^{-1}\delta.
\]
This completes the proof of the claim.
\end{proof}

Note that $CR(f)=\{0,1\}$. Let $X^\mathbb{N}$ be the product space of infinitely many copies of $X$ and let $D$ denote the metric on $X^\mathbb{N}$ defined by
\[
D(x,y)=\sup_{j\ge1}2^{-j}|x_j-y_j|
\]
for all $x=(x_j)_{j\ge1},y=(y_j)_{j\ge1}\in X^\mathbb{N}$. We define $g\colon X^\mathbb{N}\to X^\mathbb{N}$ by: for any $x=(x_j)_{j\ge1},y=(y_j)_{j\ge1}\in X^\mathbb{N}$, $y=g(x)$ if and only if $y_j=f(x_j)$ for all $j\ge1$. Since $f$ is ball expanding, $f$ has the h-shadowing property and so does $g$. Note that $g$ satisfies the following properties
\begin{itemize}
\item $g(x)=x$ for all $x\in\{0,1\}^\mathbb{N}$,
\item $CR(g)=\{0,1\}^\mathbb{N}$, an uncountable set,
\item $\mathcal{C}(g)=\{\{x\}\colon x\in\{0,1\}^\mathbb{N}\}$ and $\mathcal{C}_{\rm ter}(g)=\{\{1^\infty\}\}$.
\end{itemize}
If $g$ is ball expanding, then since $h_{\rm top}(g)=0$, by Theorem 1.2, $CR(g)$ must be a finite set. It follows that $g$ is not ball expanding. Defining $x=(x_j)_{j\ge1}\in X^\mathbb{N}$ by $x_j=2^{-j+1}$ for all $j\ge1$, we easily see that
\[
\lim_{i\to\infty}D(g^i(x),1^\infty)=0;
\]
however,
\[
x\not\in\bigcup_{i\ge0}g^{-i}(CR(g))
\] 
(see Remark 1.6 in Section 1).
\end{ex}

\begin{ex}
\normalfont
Let $I_n=[4^{-n},2\cdot 4^{-n}]$ for all $n\ge1$. Let
\[
X=\{0,2\}\cup\bigcup_{n\ge1}I_n
\]
and note that $X\subset[0,2]$. We define a surjective continuous map $f\colon X\to X$ by
\begin{itemize}
\item $f(x)=2$ for all $x\in\{2\}\cup I_1$,
\item $f(x)=4x$ for all $x\in X\setminus(\{2\}\cup I_1)$.
\end{itemize}

We shall prove that $f$ is ball expanding.

\begin{proof}
Note that $y=f(4^{-1}y)$ for all $y\in X$. Let $0<\delta<3\cdot2^{-1}$. It is sufficient to show that for any $x\in X$ and $y\in B_\delta(f(x))\setminus\{f(x)\}$, we have $y=f(z)$ for some $z\in B_{4^{-1}\delta}(x)$. If $x\in\{2\}\cup I_1$, then $B_\delta(f(x))=B_\delta(2)=\{2\}$ and so $B_\delta(f(x))\setminus\{f(x)\}=\emptyset$. If $x\not\in\{2\}\cup I_1$, then we have $f(x)=4x$. Letting $z=4^{-1}y$, we have $y=f(z)$. By
\[
|f(x)-y|=|4x-y|\le\delta,
\]
we obtain
\[
|x-z|=|x-4^{-1}y|\le4^{-1}\delta;
\]
thus the claim has been proved.
\end{proof}

Note that $X$ is uncountable and not totally disconnected, but $h_{\rm top}(f)=0$.
\end{ex}

\begin{ex}
\normalfont
In \cite{BGO}, the tent map $T\colon[0,1]\to [0,1]$, defined as $T(x)=1-|1-2x|$ for all $x\in[0,1]$, is given as an example of a ball expanding map (see Example 2.9 of \cite{BGO}). For the sake of completeness, we give a proof.

\begin{proof}
It is sufficient to show that 
\[
B_{2\delta}(T(x))\subset T(B_\delta(x))
\]
for all $x\in[0,1]$ and $0<\delta\le4^{-1}$. We may assume $x\in[0,2^{-1}]$. Note that $T(x)=2x$. If $x\in[0,\delta]$, then $T(B_\delta(x))=[0,2x+2\delta]$. Since
\[
B_{2\delta}(T(x))\subset[2x-2\delta,2x+2\delta]\cap[0,1]=[0,2x+2\delta],
\]
we obtain $B_{2\delta}(T(x))\subset T(B_\delta(x))$. If $x\in[\delta,2^{-1}-\delta]$, then $T(B_\delta(x))=[2x-2\delta,2x+2\delta]$. Since
\[
B_{2\delta}(T(x))\subset[2x-2\delta,2x+2\delta],
\]
we obtain $B_{2\delta}(T(x))\subset T(B_\delta(x))$. If $x\in[2^{-1}-\delta,2^{-1}]$, then $T(B_\delta(x))=[2x-2\delta,1]$. Since
\[
B_{2\delta}(T(x))\subset[2x-2\delta,2x+2\delta]\cap[0,1]=[2x-2\delta,1],
\]
we obtain $B_{2\delta}(T(x))\subset T(B_\delta(x))$. This completes the proof of the claim.
\end{proof}
Note that $[0,1]$ is connected and $T$ is locally eventually onto.

Let $S^1=\{z\in\mathbb{C}\colon|z|=1\}$ with the arc-length metric. It is easy to see that the doubling map $f\colon S^1\to S^1$, defined as $f(z)=z^2$ for all $z\in S^1$, is ball expanding, where $S^1$ is connected and $f$ is locally eventually onto.

Let $g\colon[0,1]\to[0,1]$ be the logistic map defined as $g(x)=4x(1-x)$ for all $x\in[0,1]$. It is well-known that $g\colon[0,1]\to[0,1]$ is topologically conjugate to $T\colon[0,1]\to [0,1]$. As claimed in \cite{BGO}, since $T$ is ball expanding and so has the h-shadowing property, $g$ also has the h-shadowing property. Let $0<\delta\le2^{-1}$ and note that $g(2^{-1}-\delta)=g(2^{-1}+\delta)=1-4\delta^2$. We have
\[
B_\delta(g(2^{-1}))=B_\delta(1)=[1-\delta,1]
\]
and
\[
g(B_\delta(2^{-1}))=g([2^{-1}-\delta,2^{-1}+\delta])=[1-4\delta^2,1].
\]
If $\delta<4^{-1}$, then $1-\delta<1-4\delta^2$ and so $B_\delta(g(2^{-1}))\not\subset g(B_\delta(2^{-1}))$. This implies that $g$ is not ball expanding (see Example 5.5 of \cite{BGO}).
\end{ex}

\begin{ex}
\normalfont
Let $X^\mathbb{N}$ be the product space of infinitely many copies of $X$ and let $D$ denote the metric on $X^\mathbb{N}$ defined by
\[
D(x,y)=\sup_{j\ge1}2^{-j}d(x_j,y_j)
\]
for all $x=(x_j)_{j\ge1},y=(y_j)_{j\ge1}\in X^\mathbb{N}$. We consider the shift map $f\colon X^\mathbb{N}\to X^\mathbb{N}$ defined as: for any $x=(x_j)_{j\ge1},y=(y_j)_{j\ge1}\in X^\mathbb{N}$, $y=f(x)$ if and only if $y_j=x_{j+1}$ for all $j\ge1$.

We shall prove that $f$ is ball expanding.

\begin{proof}
It is sufficient to show that 
\[
B_\delta(f(x))\subset f(B_{2^{-1}\delta}(x))
\]
for all $x\in X^\mathbb{N}$ and $\delta>0$. Given any $x\in X^\mathbb{N}$ and $y\in B_\delta(f(x))$, we have $d(x_{j+1},y_j)\le2^j\delta$ for all $j\ge1$. We define $z=(z_j)_{j\ge1}\in X^\mathbb{N}$ by
\begin{itemize}
\item $z_1=x_1$,
\item $z_{j+1}=y_j$ for all $j\ge1$.
\end{itemize}
Note that $y=f(z)$. Letting $m_j=2^{-j-1}d(x_{j+1},z_{j+1})$, $j\ge0$, we see that
 \begin{itemize}
\item $m_0=2^{-1}d(x_1,x_1)=0$,
\item $m_j=2^{-j-1}d(x_{j+1},y_j)\le2^{-j-1}\cdot2^j\delta\le2^{-1}\delta$ for all $j\ge1$.
\end{itemize}
It follows that $D(x,z)=\sup_{j\ge0}m_j\le2^{-1}\delta$, i.e., $z\in B_{2^{-1}\delta}(x)$. Since $y\in B_\delta(f(x))$ is arbitrary, we conclude that
\[
B_\delta(f(x))\subset f(B_{2^{-1}\delta}(x)).
\]
Since $x\in X$ is arbitrary, this completes the proof of the claim.
\end{proof}

Note that $f\colon X^\mathbb{N}\to X^\mathbb{N}$ is locally eventually onto. If $X=\{0,1\}$, then $X^\mathbb{N}$ is perfect and totally disconnected. If $X=[0,1]$, then $X^\mathbb{N}$ is connected and infinite-dimensional.
\end{ex}

\appendix

\section{}

The aim of this Appendix A is to show that the same conclusion as in Lemma 1.1 holds under the assumption of $L$-Lipschitz shadowing property, where $0<L<1$. Precisely, we prove the following. 

\begin{lem}
Let $f\colon X\to X$ be a continuous map. Let $0<L<1$ and $\delta_0>0$ be as in the definition of $L$-Lipschitz shadowing property. For any $C\in\mathcal{C}(f)$ and $x\in X$, if $d(x,C)\le\delta_0$ and
\[
\lim_{i\to\infty}d(f^i(x),C)=0,
\]
then $x\in C$.
\end{lem}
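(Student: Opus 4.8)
The plan is to reduce the lemma to producing a single $w\in C$ with $w\rightarrow x$. The reverse relation is free: from $\lim_{i\to\infty}d(f^i(x),C)=0$ one gets $x\rightarrow v$ for some $v\in C$ (chain along the orbit of $x$ and a final jump to a limit point of $(f^i(x))_{i\ge0}$ lying in $C$), and chain transitivity of $f|_C$ then gives $x\rightarrow w$ for every $w\in C$. Hence if $w\rightarrow x$ for some $w\in C$, then $x\rightarrow x$ and $x\leftrightarrow w$, so $x\in C$. We may assume $a:=d(x,C)\in(0,\delta_0]$, since $a=0$ already forces $x\in C$.

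Unlike in Lemma 1.1, there is no ball expanding pull-back available, so I would manufacture points close to $C$ whose iterates return close to $x$ by an iterated use of the $L$-Lipschitz shadowing property. The elementary step is: given $z\in X$ and $N\ge0$ with $b:=d(z,C)\le\delta_0$ and $d(f^N(z),x)\le b$, choose $z^\ast\in C$ with $d(z,z^\ast)=b$ and, using that $f|_C\colon C\to C$ is onto (a chain transitive map of a compact space is surjective), a point $y\in C$ with $f(y)=z^\ast$; then
\[
\xi=(y,z,f(z),\dots,f^{N-1}(z),x,f(x),f^2(x),\dots)
\]
is a $b$-pseudo orbit, its only nonzero errors being $d(f(y),z)=d(z^\ast,z)\le b$ and $d(f^N(z),x)\le b$. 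As $b\le\delta_0$, the $L$-Lipschitz shadowing property yields $z'$ with $d(f^j(z'),\xi_j)\le Lb$ for all $j$; evaluating at the first coordinate and at the coordinate carrying $x$ gives $d(z',C)\le Lb$ and $d(f^{N+1}(z'),x)\le Lb$. Thus one step sends a pair $(z,N)$ with gap $b$ to a pair $(z',N+1)$ with gap $Lb$.

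Starting from $(z,N)=(x,0)$, which is admissible because $d(f^0(x),x)=0\le a$ and $d(x,C)=a\le\delta_0$, and iterating, I obtain for each $k\ge1$ a point $z_k$ with
\[
d(z_k,C)\le L^k a,\qquad d(f^k(z_k),x)\le L^k a.
\]
The main point to watch — and the crux of the argument — is that a single shadowing step only contracts the gap by the fixed factor $L<1$, rather than reaching arbitrarily small neighborhoods of $C$ in one stroke as the ball expanding inclusion does; the scheme closes up only because the successive pseudo-orbit errors $L^k a$ remain $\le\delta_0$ throughout, which is exactly what the hypothesis $d(x,C)=a\le\delta_0$ guarantees. (If some gap becomes $0$ the process stops with $x\in C$ outright.)

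Finally, since $d(z_k,C)\to0$ and $d(f^k(z_k),x)\to0$, I pick $w_k\in C$ with $d(z_k,w_k)\to0$ and pass to a subsequence along which $w_k\to w\in C$ and $z_k\to w$. For a prescribed $\delta>0$ I take $k$ large enough that $d(f(w),f(z_k))\le\delta$ (uniform continuity) and $d(f^k(z_k),x)\le\delta$; then
\[
(w,f(z_k),f^2(z_k),\dots,f^{k-1}(z_k),x)
\]
is a $\delta$-chain from $w$ to $x$. Hence $w\rightarrow x$, and the reduction above finishes the proof. The only genuinely delicate ingredient is the bookkeeping ensuring that every intermediate pseudo orbit stays within the admissible range $(0,\delta_0]$ for Lipschitz shadowing; everything else is routine.
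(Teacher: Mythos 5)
Your proposal is correct and takes essentially the same route as the paper's Appendix~A proof: an induction that applies $L$-Lipschitz shadowing to chains of the form $(y,z_k,f(z_k),\dots,f^{k-1}(z_k),x)$ to produce points $z_k$ with $\max\{d(z_k,C),d(f^k(z_k),x)\}\le L^k a$, yielding $w\rightarrow x$ for some $w\in C$, and then closing the loop via $x\rightarrow v\rightarrow w$ exactly as in Lemma~1.1. The only cosmetic differences are your explicit extension of each finite chain to an infinite pseudo orbit (the paper applies shadowing to a finite chain directly) and the bookkeeping, which reads more smoothly if $b$ is taken as an upper bound for $d(z,C)$ rather than its exact value, as your stated iteration bounds in fact do.
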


\begin{proof}
Let us show that for every $i\ge0$, we have
\[
\max\{d(x_i,C),d(f^i(x_i),x)\}\le L^i\delta_0
\]
for some $x_i\in X$. We argue by induction on $i$. When $i=0$, let $x_0=x$. Assume that the inequality holds for some $i\ge0$ and $x_i\in X$. We take $y,z\in C$ such that
\[
d(x_i,y)=d(x_i,C)\le L^i\delta_0
\]
and $y=f(z)$. Since
\[
\gamma_i=(z,x_i,f(x_i),\dots,f^{i-1}(x_i),x)
\]
($\gamma_0=(z,x)$ if $i=0$) is an $L^i\delta_0$-chain of $f$, we obtain
\[
\max\{d(x_{i+1},z),d(f^{i+1}(x_{i+1}),x)\}\le L^{i+1}\delta_0
\]
for some $x_{i+1}\in X$. Since
\[
d(x_{i+1},C)\le d(x_{i+1},z)\le L^{i+1}\delta_0,
\]
the induction is complete. From
\[
\lim_{i\to\infty}d(x_i,C)=\lim_{i\to\infty}d(f^i(x_i),x)=0,
\]
it follows that $w\rightarrow x$ for some $w\in C$. The rest of the proof is identical to that of Lemma 1.1.
\end{proof}

\section{}

By Theorem 4.3 of \cite{BGO}, we know that if a continuous map $f\colon X\to X$ is ball expanding, then $f$ has the h-shadowing property. The aim of this Appendix B is to prove the following generalization of Theorem 1.5.

\begin{thm}
If $X$ is connected and if a continuous map $f\colon X\to X$ has the h-shadowing property, then $f$ is locally eventually onto. 
\end{thm}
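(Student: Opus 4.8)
The plan is to reduce the statement to chain transitivity and then to exploit the exact-endpoint feature of h-shadowing, which is precisely what separates it from ordinary shadowing. First, since $f$ has the h-shadowing property, $f$ is an open map (Remark 1.1). As $X$ is compact and $f$ is continuous, $f(X)$ is closed; as $f$ is open, $f(X)$ is open; since $X$ is connected and $f(X)\neq\emptyset$, we get $f(X)=X$, so $f$ is surjective. I would then reduce the conclusion to chain transitivity: if $\mathcal{C}(f)=\{X\}$, then Corollary 14 of \cite{RW} gives that $f$ is chain mixing; since h-shadowing implies shadowing (truncate an infinite $\delta$-pseudo orbit, h-shadow each finite piece, and take a limit by compactness), chain mixing upgrades to mixing; and since $f$ has h-shadowing, mixing upgrades to locally eventually onto. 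All three implications are recorded in the excerpt, so the entire burden falls on proving chain transitivity.

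To prove chain transitivity I would follow the skeleton of the proof of Theorem 1.5: pick a terminal chain component $C\in\mathcal{C}_{\mathrm{ter}}(f)$ (which exists, as noted in Section 1), and aim to show that $C$ is clopen; connectedness then forces $C=X$, whence $CR(f)=C=X$ is a single chain component. Since $C$ is automatically closed, the whole problem is to show that $C$ is open. This is exactly the role played by Lemma 1.2 in the ball expanding case, but here ball expansion, and hence Lemma 1.1, is unavailable, so a genuinely new argument is required.

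For openness I would prove an h-shadowing analogue of Lemma 1.1: every $x$ with $d(x,C)$ small lies in $C$. One direction is soft: a chain-stability argument using a $\delta$-chain of the form $(c,f(x),f^2(x),\dots)$ with $c\in C$ close to $x$ shows that points near $C$ fall into the basin of $C$, i.e.\ $\lim_{i\to\infty}d(f^i(x),C)=0$, which yields $x\rightarrow v$ for some $v\in C$. The substantive direction is to produce $w\in C$ with $w\rightarrow x$, and here the exact-endpoint property enters. After verifying that $f|_C\colon C\to C$ is surjective (so that near-misses to $C$ admit preimages in $C$), I would feed near-miss chains that start in $C$ and terminate at $x$ into the h-shadowing property to realize genuine orbit segments emanating from points arbitrarily close to $C$ and hitting $x$ exactly; passing to a limit and using that $C$ is a single chain class yields $w\rightarrow x$ with $w\in C$. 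Combined with $x\rightarrow v\in C$ this gives $x\leftrightarrow w$, so $x\in C$. Finally, terminality propagates the condition $\lim_{i\to\infty}d(f^i(\cdot),C)=0$ to a whole neighborhood of $C$, so this implication makes $C$ open.

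The main obstacle is exactly this promotion from ``near $C$'' to ``in $C$''. Ordinary shadowing does not suffice: the north--south homeomorphism of $S^1$ is open and connected and has the shadowing property, yet it is not chain transitive (its sink is a proper terminal component); it fails h-shadowing precisely because an exact landing at the sink forces the constant orbit at the sink, which cannot shadow a chain that first lingers near the source. Thus the exact-endpoint property must be used in an essential way. The delicate point is that, near the attracting set $C$, exact backward realizations tend to drift \emph{away} from $C$, so one must control the realized orbit segments carefully. This is the analogue of the contraction factor $L^i$ that made the pull-back in Lemma 1.1 automatic, and replacing that quantitative control by the purely qualitative exactness of h-shadowing is where the real work lies.
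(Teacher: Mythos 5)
Your reduction to chain transitivity (chain transitive $+$ connected $\Rightarrow$ chain mixing, then mixing, then locally eventually onto via h-shadowing) matches the paper's endgame exactly, and you correctly sense that the exact-endpoint feature of h-shadowing must do the real work. But the central lemma you propose --- that for a terminal chain component $C$, every $x$ with $d(x,C)$ small lies in $C$ (equivalently, that $C$ itself is clopen) --- is \emph{false} under h-shadowing alone, and the paper's own Example 2.1 refutes it. There, $g\colon X^{\mathbb N}\to X^{\mathbb N}$ has the h-shadowing property, $\mathcal{C}_{\rm ter}(g)=\{\{1^\infty\}\}$, yet the fixed points $p^{(m)}$ (equal to $1$ in every coordinate except a $0$ in coordinate $m$) satisfy $D(p^{(m)},1^\infty)=2^{-m}\to0$ while lying in distinct chain components; so no neighborhood of $C=\{1^\infty\}$ is contained in $C$, and $w\rightarrow p^{(m)}$ fails for every $w\in C$. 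Even your ``soft direction'' fails there: points arbitrarily close to $C$ need not fall into its basin (the $p^{(m)}$ are fixed), because the propagation of attraction to a neighborhood used in Lemma 1.2 relies on \emph{finiteness} of $\mathcal{C}(f)$ (Theorem 1.3, a consequence of contractive shadowing), which h-shadowing does not give ($\mathcal{C}(g)$ is uncountable). The mechanism of your ``substantive direction'' also has a quantifier gap that explains the failure: to build the chain from $C$ to a fixed $x$ with $d(x,C)=\rho>0$ you must invoke h-shadowing at a scale $\delta\ge\rho$, which pins the tracing accuracy at some fixed $\epsilon=\epsilon(\rho)>0$; the realized orbit segments therefore start only $\epsilon$-close to $C$, yielding $\eta_0$-chains from $C$ to $x$ for one fixed $\eta_0>0$, never the $\eta$-chains for all $\eta>0$ needed for $w\rightarrow x$. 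In Lemma 1.1 this was rescued by the contraction factor $L^i\delta_0\to0$, which has no analogue here --- so ``passing to a limit'' does not close the argument.

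The paper circumvents exactly this obstruction by never claiming $C$ is clopen: instead it squeezes a clopen \emph{attractor block} between a chain stable set and its $r$-neighborhood. Given a closed chain stable $S$ with $f(S)=S$, Lemma B.1 takes $A$ to be the set of endpoints of $\delta$-chains starting in $S$ and shows $S\subset A\subset B_r(S)$ with $f(\overline{A})\subset{\rm int}[A]$; Lemma B.2 then sets $B=\bigcap_{i\ge0}f^i(\overline{A})$, so that $f(B)=B$, and uses h-shadowing precisely where you wanted to use it --- a point $x$ with $0<d(x,B)\le\delta$ is realized as $x=f^j(w)$ with $w\in B_\epsilon(B)$, whence $x\in f^j(B_\epsilon(B))\subset B_a(B)$ with $a<d(x,B)$, a contradiction --- to conclude that $B$ is \emph{open}. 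The crucial difference from your plan is that $B$ is in general strictly larger than the chain component (in Example 2.1 it is a clopen neighborhood of $1^\infty$ swallowing nearby basin points and components), and the uniform forward contraction $f^j(B_\epsilon(B))\subset B_a(B)$, supplied by the block construction and compactness, replaces the missing factor $L^i$. Finally, the paper applies this to the chain $\omega$-limit sets $\omega^\ast(x,f)$ (closed, $f(\omega^\ast)=\omega^\ast$, chain stable): connectedness forces $B=X$, hence $X=B_r(\omega^\ast(x,f))$ for every $r>0$, hence $\omega^\ast(x,f)=X$ for every $x$, which is chain transitivity; your endgame then finishes the proof. (Your preliminary observation that $f$ is open and surjective is correct but is not needed.)
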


For a subset $S$ of $X$ and $r>0$, we denote by $B_r(S)$ the closed $r$-neighborhood of $S$:
\[
B_r(S)=\{x\in X\colon d(x,S)=\inf_{y\in S}d(x,y)\le r\}.
\]
Let us first prove the following lemma.

\begin{lem}
Let $f\colon X\to X$ be a continuous map. For a closed subset $S$ of $X$, if
\begin{itemize}
\item $f(S)=S$,
\item $S$ is chain stable,
\end{itemize}
then for any $r>0$, there is a subset $A$ of $X$ such that $S\subset A\subset B_r(S)$ and $f(\overline{A})\subset{\rm int}[A]$, where ${\rm int}[A]$ denotes the interior of $A$.
\end{lem}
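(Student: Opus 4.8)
The plan is to realize $A$ as the set of points reachable from $S$ by $\delta$-chains, with $\delta$ chosen small enough that chain stability confines these endpoints to $B_r(S)$. Fix $r>0$. Applying the definition of chain stability with $\epsilon=r$, I would choose $\delta>0$ so that every $\delta$-chain $(x_i)_{i=0}^k$ of $f$ with $x_0\in S$ satisfies $d(x_k,S)\le r$, and then set
\[
A=S\cup\{\,y\in X\colon y=x_k\text{ for some }\delta\text{-chain }(x_i)_{i=0}^k\text{ of }f\text{ with }x_0\in S\,\}.
\]
The inclusions $S\subset A\subset B_r(S)$ are then immediate: $S\subset A$ holds by construction, and any $y\in A$ is either a point of $S$, so $d(y,S)=0$, or the endpoint of a $\delta$-chain issuing from $S$, so $d(y,S)\le r$ by the choice of $\delta$; in either case $y\in B_r(S)$.

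The heart of the matter is the relation $f(\overline{A})\subset\mathrm{int}[A]$, and the idea is to build a half-$\delta$ of slack into the chain condition. Given $w\in\overline{A}$, I would invoke continuity of $f$ at $w$ to find $\rho>0$ with $d(a,w)<\rho\Rightarrow d(f(a),f(w))<\delta/2$, and then, using $w\in\overline{A}$, pick $a\in A$ with $d(a,w)<\rho$. For an arbitrary $v$ with $d(v,f(w))\le\delta/2$ one has
\[
d(f(a),v)\le d(f(a),f(w))+d(f(w),v)<\delta/2+\delta/2=\delta,
\]
so appending $v$ to a $\delta$-chain from $S$ that ends at $a$ --- or forming the one-step chain $(a,v)$ when $a\in S$ --- yields a $\delta$-chain from $S$ ending at $v$, whence $v\in A$. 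This shows $B_{\delta/2}(f(w))\subset A$, i.e.\ $f(w)\in\mathrm{int}[A]$; as $w\in\overline{A}$ was arbitrary, $f(\overline{A})\subset\mathrm{int}[A]$.

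I expect the interior containment to be the only real obstacle. A naive reading of the construction gives only forward invariance $f(A)\subset A$; upgrading this to $f(\overline{A})\subset\mathrm{int}[A]$ requires both the density of $A$ in $\overline{A}$ (to approximate $w$ by a genuine chain endpoint $a$) and the $\delta/2$ of room in the chain inequality (to absorb a whole ball $B_{\delta/2}(f(w))$ into $A$). The hypothesis $f(S)=S$ plays a supporting role: it is the surjectivity of $f|_S$ that guarantees each $s\in S$ is itself a chain endpoint (via $(s',s)$ with $f(s')=s$, $s'\in S$), so that one may, if preferred, define $A$ purely as the set of chain endpoints without separately adjoining $S$. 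Chain stability is what does the essential work, namely bounding the size of $A$ by $B_r(S)$.
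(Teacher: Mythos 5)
Your proposal is correct and follows essentially the same route as the paper: both define $A$ as the set of endpoints of $\delta$-chains issuing from $S$ (your explicit adjunction of $S$ is redundant for exactly the reason you note, via $f(S)=S$), choose $\delta$ by chain stability to force $A\subset B_r(S)$, and prove $f(\overline{A})\subset\mathrm{int}[A]$ by the same $\delta/2$-slack triangle-inequality argument, approximating a point of $\overline{A}$ by a chain endpoint and appending one more link. No gaps.
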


\begin{proof}
For $\delta>0$, let $A$ be the set of $y\in X$ such that there are $x\in S$ and a $\delta$-chain $(x_i)_{i=0}^k$ of $f$ such that $x_0=x$ and $x_k=y$. For any $y\in S$, since $f(S)=S$, by taking $x\in S$ with $y=f(x)$, we have that $(x,y)$ is a $\delta$-chain of $f$, thus $y\in A$. It follows that $S\subset A$. Since $S$ is chain stable, for any $r>0$, we have $A\subset B_r(S)$ for some $\delta>0$.

For any $z\in\overline{A}$, we take $b>0$ such that $f(B_b(z))\subset B_{\delta/2}(f(z))$. Since $z\in\overline{A}$, we have $y\in B_b(z)$ for some $y\in A$. Since $y\in A$, there are $x\in S$ and a $\delta$-chain $(x_i)_{i=0}^k$ of $f$ such that $x_0=x$ and $x_k=y$. For any $w\in B_{\delta/2}(f(z))$, by
\[
d(f(y),w)\le d(f(y),f(z))+d(f(z),w)\le\delta/2+\delta/2=\delta,
\]
we see that
\[
\gamma=(x_0,x_1,\dots,x_k,w)
\]
is a $\delta$-chain of $f$; therefore, $w\in A$. It follows that $B_{\delta/2}(f(z))\subset A$, thus $f(z)\in{\rm int}[A]$. Since $z\in\overline{A}$ is arbitrary, we obtain $f(\overline{A})\subset{\rm int}[A]$, completing the proof of the lemma.
\end{proof}

With the aid of Lemma B.1, we prove the following lemma.

\begin{lem}
Let $f\colon X\to X$ be a continuous map with the h-shadowing property. For a closed subset $S$ of $X$, if
\begin{itemize}
\item $f(S)=S$,
\item $S$ is chain stable,
\end{itemize}
then for any $r>0$, there is a clopen subset $B$ of $X$ such that $S\subset B\subset B_r(S)$.
\end{lem}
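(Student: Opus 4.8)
The plan is to use Lemma B.1 to build a trapping region around $S$ and then to recognize the desired clopen set as an interior that coincides with a compact image. First I would apply Lemma B.1 to the given $r$, obtaining a set $A$ with $S\subseteq A\subseteq B_r(S)$ and $f(\overline A)\subseteq\mathrm{int}[A]$, and I would set $B:=\mathrm{int}[A]$. Then $B$ is open, $B\subseteq A\subseteq B_r(S)$, and $S\subseteq B$ since $S=f(S)\subseteq f(\overline A)\subseteq\mathrm{int}[A]=B$. Everything except closedness of $B$ is immediate, so the heart of the matter is to show that $B$ is closed. My strategy for this is to prove the identity
\[
B=\mathrm{int}[A]=f(\overline A),
\]
because $f(\overline A)$ is the continuous image of the compact set $\overline A$, hence compact and closed; thus the identity exhibits $B$ as at once open and closed.

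The inclusion $f(\overline A)\subseteq\mathrm{int}[A]=B$ is exactly the trapping property furnished by Lemma B.1, so the work lies in the reverse inclusion $\mathrm{int}[A]\subseteq f(\overline A)$, and this is where the h-shadowing hypothesis is indispensable. Recall from the construction in Lemma B.1 that $A$ consists of the endpoints of $\delta$-chains of $f$ starting in $S$; since every prefix of such a chain is again a chain from $S$, each $q\in B\subseteq A$ is joined to some $x_0\in S$ by a $\delta$-chain $(x_i)_{i=0}^m$ whose points all lie in $A\subseteq\overline A$. I would feed this chain into the h-shadowing property to obtain a genuine orbit that $\epsilon$-shadows it and lands exactly, $f^m(z)=q$. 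Then $f^{m-1}(z)$ is a preimage of $q$, so it is enough to certify that $f^{m-1}(z)\in\overline A$, which yields $q\in f(\overline A)$. As $\overline A$ is forward invariant, $f(\overline A)\subseteq\mathrm{int}[A]\subseteq\overline A$, it suffices to place the shadowing point $z$ itself in $\overline A$; and $z$ lies within $\epsilon$ of $x_0\in S$. Since a single $\delta$-step from $S$ reaches every point within $\delta$ of $S$ (using $f(S)=S$), one has $B_\delta(S)\subseteq A\subseteq\overline A$, so $z\in\overline A$ provided $\epsilon\le\delta$.

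The main obstacle I anticipate is precisely the coordination of these constants. The mesh $\delta$ plays two roles---it controls the size of $A$ through chain stability (forcing $A\subseteq B_r(S)$) and it is the mesh of the chain to which h-shadowing is applied---while $\epsilon$ is the shadowing precision; and the argument above needs the exact-landing clause to be available at a scale $\delta$ that is no finer than $\epsilon$, i.e. $\epsilon\le\delta$ together with the requirement that $\delta$-chains be $\epsilon$-shadowed with exact landing. Securing such a compatible pair $(\epsilon,\delta)$, and thereby the inclusion $\mathrm{int}[A]\subseteq f(\overline A)$, is the delicate quantitative step; it is here that the full strength of h-shadowing is used, since the mere openness of $f$ recorded in Remark 1.1 delivers that $f(\mathrm{int}[A])$ is open but not the surjectivity-type statement $\mathrm{int}[A]\subseteq f(\overline A)$ that the exact-landing clause provides.

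Once the identity $B=f(\overline A)$ is in hand, the conclusion is immediate: $B$ is open as an interior and closed as a compact set, $S\subseteq B$, and $B\subseteq B_r(S)$, so $B$ is the required clopen set. The remaining bookkeeping (forward invariance of $\overline A$, the containment $B_\delta(S)\subseteq A$, and that chain prefixes are chains) is routine, so I would concentrate the effort on the constant selection described above.
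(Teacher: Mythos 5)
Your overall frame (take $A$ from Lemma B.1, then manufacture a clopen set inside it) matches the paper, but your specific mechanism --- proving ${\rm int}[A]=f(\overline{A})$ --- has a genuine gap at exactly the point you flag as ``delicate,'' and it is not repairable within the stated hypotheses. You need a pair $(\epsilon,\delta)$ such that $\delta$-chains are $\epsilon$-shadowed with exact landing \emph{and} $\epsilon\le\delta$ (so that the shadowing point $z\in B_\epsilon(x_0)$ lies in $B_\delta(S)\subset A$). But h-shadowing has the opposite quantifier structure: for each $\epsilon$ it supplies some $\delta=\delta(\epsilon)$, in general much smaller than $\epsilon$, and nothing allows you to reverse this. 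A modulus with $\delta(\epsilon)\ge\epsilon$ is a contractive/Lipschitz-type shadowing estimate --- precisely the extra strength that ball expanding maps enjoy (Remark 1.2), and which the paper's quoted results show to be drastically stronger than plain shadowing (e.g., a homeomorphism with $L$-Lipschitz shadowing, $0<L<1$, lives on a finite space). Since Lemma B.2 is stated for arbitrary maps with h-shadowing --- that is its whole point, as it feeds the generalization Theorem B.1 --- your argument proves at best a special case, and the key identity ${\rm int}[A]=f(\overline{A})$ remains unestablished in the general setting.

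The paper's proof sidesteps the coordination problem by a different choice of $B$: it takes the maximal invariant set $B=\bigcap_{i\ge0}f^i(\overline{A})$ in the trapping region, which is automatically closed with $f(B)=B$, and proves openness by contradiction. There one fixes $\epsilon$ first with $B_\epsilon(B)\subset A$, takes $\delta$ from h-shadowing for this $\epsilon$ (no inequality between $\epsilon$ and $\delta$ is needed), and, given $x$ with $0<d(x,B)\le\delta$, shadows the chain $(z,f(z),\dots,f^{j-1}(z),x)$ with $z\in B$ and $f^j(z)=y$ the nearest point of $B$ to $x$; the exact-landing clause yields $x=f^j(w)$ with $w\in B_\epsilon(B)$, and compactness gives the attractor property $f^j(\overline{A})\subset B_a(B)$ for large $j$, forcing $d(x,B)\le a<d(x,B)$, a contradiction. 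The decisive idea you are missing is this use of the \emph{asymptotic} contraction of the trapping region onto $B$, which substitutes for the unavailable quantitative inequality $\epsilon\le\delta$. If you wish to keep your identity-based route, it does go through for ball expanding maps, where $B_\delta(f(x))\subset f(B_{L\delta}(x))$ with $L<1$ supplies the contractive coordination directly --- but then you have only recovered the setting of Theorem 1.5, not proved Lemma B.2.
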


\begin{proof}
For $r>0$, we take a subset $A$ of $X$ as given in Lemma B.1. Note that $f(\overline{A})\subset\overline{A}$ and let $B=\bigcap_{i\ge0}f^i(\overline{A})$. We have the following properties:
\begin{itemize}
\item $S\subset B\subset{\rm int}[A]\subset  B_r(S)$,
\item $B$ is closed in $X$ and satisfies $f(B)=B$.
\end{itemize}
Let us prove that $B$ is open (and so clopen) in $X$. We take $\epsilon>0$ such that $B_\epsilon(B)\subset A$. For any $a>0$, by compactness, there is $i\ge0$ such that $f^j(\overline{A})\subset B_a(B)$ for all $j\ge i$. It follows that $f^j(B_\epsilon (B))\subset B_a(B)$ for all $j\ge i$. For this $\epsilon$, we fix $\delta>0$ as in the definition of the h-shadowing property. If $B$ is not open in $X$, then we have
\[
0<d(x,B)\le\delta
\]
for some $x\in X$. We take $a>0$ and $j\ge1$ with $a<d(x,B)$ and  $f^j(B_\epsilon(B))\subset B_a(B)$. There are $y,z\in B$ such that 
\[
d(x,y)=d(x,B)\le\delta
\]
and $y=f^j(z)$. Since
\[
\gamma=(z,f(z),\dots,f^{j-1}(z),x)
\]
is a $\delta$-chain of $f$, we obtain $x=f^j(w)$ for some $w\in B_\epsilon(z)$. It follows that
\[
x\in f^j(B_\epsilon(B))\subset B_a(B),
\]
i.e., $d(x,B)\le a$, a contradiction. Thus, we conclude that $B$ is open in $X$, completing the proof of the lemma.
\end{proof}

Given a continuous map $f\colon X\to X$ and $x\in X$, we define the {\em chain $\omega$-limit set} $\omega^\ast(x,f)$ of $x$ for $f$ as the set of $y\in X$ such that for any $\delta>0$ and $l>0$, there is a $\delta$-chain $(x_i)_{i=0}^k$ of $f$ with $x_0=x$, $x_k=y$, and $k\ge l$. Note that for every $x\in X$,
\begin{itemize}
\item $\omega^\ast(x,f)$ is closed in $X$,
\item $f(\omega^\ast(x,f))=\omega^\ast(x,f)$,
\item $\omega^\ast(x,f)$ is chain stable, because $y\rightarrow z$ implies $z\in\omega^\ast(x,f)$ for all $y\in\omega^\ast(x,f)$ and $z\in X$.
\end{itemize}

Finally, we prove Theorem B.1.

\begin{proof}[Proof of Theorem B.1]
Let $x\in X$. Since $f$ has the h-shadowing property, Lemma B.2 implies that for any $r>0$, there is a clopen subset $B$ of $X$ such that
\[
\omega^\ast(x,f)\subset B\subset B_r(\omega^\ast(x,f)).
\]
As $X$ is connected, we obtain $B=X$ and so $X=B_r(\omega^\ast(x,f))$. Since $r>0$ is arbitrary, we obtain $X=\omega^\ast(x,f)$. Since $x\in X$ is arbitrary, by the definition of the chain $\omega$-limit set, we obtain $x\rightarrow y$ for all $x,y\in X$; therefore, $\mathcal{C}(f)=\{X\}$. By Corollary 14 of \cite{RW}, $f$ is chain mixing. Since $f$ has the h-shadowing property, we conclude that $f$ is locally eventually onto, completing the proof of the theorem.
\end{proof}

\end{document}